\date{}
\providecommand{\norm}[1]{\lVert#1\rVert}
\begin{document}
\centerline {\normalsize{\bf SOLUTION OF BURGER EQUATION WITH VISCOSITY}} 
\centerline {\normalsize{\bf APPLYING THE BOUNDARY LAYER THEORY}}


\centerline{}

\centerline{\bf {Oscar Mart\'inez N\'u\~{n}ez}}

\centerline{}

\centerline{Faculty of Exact and Natural Sciences, University of
Cartagena,}

\centerline{Campus San Pablo, Avenue of Consulado,}

\centerline{Cartagena de Indias, Bolivar, Colombia}

\centerline{omartinezn@unicartagena.edu.co}

\centerline{}

\centerline{\bf {Ana Magnolia Mar\'in Ram\'irez}}

\centerline{}

\centerline{Faculty of Exact and Natural Sciences, University of
Cartagena,}

\centerline{Campus San Pablo, Avenue of Consulado,}

\centerline{Cartagena de Indias, Bolivar, Colombia}

\centerline{amarinr@unicartagena.edu.co}

\centerline{}

\centerline{\bf {Rub\'en Dar\'io Ort\'iz Ort\'iz }}

\centerline{}

\centerline{Faculty of Exact and Natural Sciences, University of
Cartagena,}

\centerline{Campus San Pablo, Avenue of Consulado,}

\centerline{Cartagena de Indias, Bolivar, Colombia}

\centerline{rortizo@unicartagena.edu.co}

\newtheorem{Theorem}{\quad Theorem}[section]

\newtheorem{Definition}[Theorem]{\quad Definition}

\newtheorem{Corollary}[Theorem]{\quad Corollary}

\newtheorem{Lemma}[Theorem]{\quad Lemma}

\newtheorem{Example}[Theorem]{\quad Example}

\centerline{}

\begin{abstract}
In this article we find the solution of the Burger equation with viscosity applying the boundary layer theory. In addition, we will observe that the solution of Burger's equation with viscosity converges to the solution of Burger's stationary equation in the norm of $L_{2}([-1,1])$.
\end{abstract}

{\bf Subject Classification:} 35QXX \\

{\bf Keywords:} Boundary layer, metastability.\\ 
   
\section{Introduction}
Most mathematical models derived from real problems in physics, as well as from other sciences, are non-linear, a property that involves difficulty in analyzing and determining solutions. However, many of these problems have by nature the property of integrability, which makes their study attractive.\\
The Burgers equation with viscosity was first introduced by Bateman \cite{B} in 1915. This (see, \cite{GL},\cite{GC},\cite{GA},\cite{GG}) is a balance between time evolution, nonlinearity and diffusion. Burgers (1948) first developed this equation primarily to throw light on turbulence described by the interaction of two opposite effects of convection and diffusion. However,
turbulence is more complex in the sense that it is both three-dimensional and
statistically random in nature. The equation is used as a model in many fields such as acoustics \cite{S}, continuous stochastic processes \cite{W}, heat conduction \cite{bluman}. Burgers’ equation can also be considered as a simplified form of the Navier-Stokes equation \cite{bell} due to the form of the nonlinear convection term and the occurrence of a viscosity term.\\
Burgers’ equation is one of the very few nonlinear partial differential equations that can be solved exactly. At present, there are several texts and articles in which the use and calculation of its solution by different methods are shown, for example in \cite{DL} and \cite{GG} the exact solution of said equation is shown, using the transformation Cole-Hopf allows to transform it into a linear diffusion equation through a nonlinear transformation.\\
In this paper, we use a different literature (boundary layer) in order to construct an approximate solution of the burger equation with valid viscosity throughout its domain.

\section{Burger equation with viscosity}
Burger equation with viscosity takes the form
\begin{equation}
\label{Burger}
u_{t}(x,t)+u(x,t)u_{x}(x,t) = \epsilon u_{xx}(x,t), \ \ -1 \leq x \leq 1, \ \epsilon > 0,\ \ t>0
\end{equation}
\begin{equation}
\label{Condicion}
u(-1,t)= \alpha \ \ \ u(1,t)= \beta
\end{equation}
If $u_{t}(x,t)= 0$ for fixed $t$, the equation (\ref{Burger}) becomes
\begin{equation}
\label{Burger estacinaria}
\epsilon u_{xx}(x,t) - u(x,t)u_{x}(x,t) = 0
\end{equation}
known as the stationary burger equation.
\begin{Theorem}
La solution of $(\ref{Burger})$ subject to $(\ref{Condicion})$ is given by:
$$U(x,\epsilon)= -\alpha\tanh\left[\frac{\alpha}{2}\left( \frac{x}{\epsilon}+k \right)  \right]$$
Where \ $U(x,\epsilon) \in L_{2}([-1,1])$. 
\end{Theorem}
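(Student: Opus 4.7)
The proposed solution $U(x,\epsilon)=-\alpha\tanh\!\left[\frac{\alpha}{2}\left(\frac{x}{\epsilon}+k\right)\right]$ is independent of $t$, so the first step is to verify that it is enough to solve the stationary problem (\ref{Burger estacinaria}). Any time-independent classical solution of (\ref{Burger}) automatically satisfies $u_t\equiv 0$ and hence (\ref{Burger estacinaria}); conversely, a solution of (\ref{Burger estacinaria}) matching the boundary data (\ref{Condicion}) is a (stationary) solution of (\ref{Burger}). I will work with (\ref{Burger estacinaria}) throughout.

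Next, I would reduce (\ref{Burger estacinaria}) to a first-order ODE. Since
\begin{equation*}
\epsilon u_{xx}-u u_{x}=\frac{d}{dx}\!\left(\epsilon u_{x}-\tfrac{1}{2}u^{2}\right),
\end{equation*}
the equation integrates once to $\epsilon u_{x}=\tfrac{1}{2}u^{2}+C_{1}$ for some constant $C_{1}$. Choosing $C_{1}=-\tfrac{1}{2}\alpha^{2}$, motivated by boundary-layer matching near $x=-1$ where $u\to\alpha$ forces $u_{x}\to 0$ at leading order, gives the separable equation
\begin{equation*}
\epsilon u_{x}=\tfrac{1}{2}\bigl(u^{2}-\alpha^{2}\bigr).
\end{equation*}
Separating variables and integrating with the partial-fraction identity
$\frac{2}{u^{2}-\alpha^{2}}=\frac{1}{\alpha}\bigl(\frac{1}{u-\alpha}-\frac{1}{u+\alpha}\bigr)$
produces, after exponentiation and a standard rearrangement, exactly the tanh profile
\begin{equation*}
U(x,\epsilon)=-\alpha\tanh\!\left[\frac{\alpha}{2}\!\left(\frac{x}{\epsilon}+k\right)\right],
\end{equation*}
with $k$ the constant of integration. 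I would then check this formula by direct differentiation, using $\tfrac{d}{d\xi}\tanh\xi=1-\tanh^{2}\xi$, to confirm that $\epsilon U_{x}=\tfrac{1}{2}(U^{2}-\alpha^{2})$ holds and hence $\epsilon U_{xx}=U U_{x}$.

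The constant $k$ is then fixed by the boundary layer matching with the data (\ref{Condicion}). This is where the main subtlety lies: a single tanh with one free parameter cannot in general satisfy both $U(-1)=\alpha$ and $U(1)=\beta$ exactly, so $k$ is chosen to meet one boundary condition while the other is satisfied to exponential accuracy in $\epsilon$ (the transition layer being of width $O(\epsilon)$). This is the step I expect to require the most care, since it is the only point at which boundary-layer theory, rather than straightforward ODE integration, is genuinely used.

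Finally, the $L_{2}$ claim is immediate: since $|\tanh(\cdot)|\le 1$, one has $|U(x,\epsilon)|\le|\alpha|$ uniformly in $x\in[-1,1]$ and $\epsilon>0$, so
\begin{equation*}
\|U(\cdot,\epsilon)\|_{L_{2}([-1,1])}^{2}=\int_{-1}^{1}U(x,\epsilon)^{2}\,dx\le 2\alpha^{2}<\infty,
\end{equation*}
giving $U(\cdot,\epsilon)\in L_{2}([-1,1])$ and completing the argument.
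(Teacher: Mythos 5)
Your route is genuinely different from the paper's, and for the ODE part it is actually more direct: you integrate the full stationary equation $\epsilon u_{xx}-uu_x=\frac{d}{dx}\left(\epsilon u_x-\frac{1}{2}u^2\right)=0$ once, fix the first integral, separate variables, and obtain the tanh profile as an \emph{exact} solution of (\ref{Burger estacinaria}). The paper instead runs the full matched-asymptotics machinery: it splits into the two problems (\ref{au})--(\ref{ae}), computes outer expansions (the constants $\beta$ and $\alpha$), rescales $s=x/\epsilon$ to get the inner equation $u^{int}_{0ss}-u^{int}_0u^{int}_{0s}=0$, solves it by the substitution $w=u^{int}_{0s}$ (which is your separation-of-variables step in disguise), applies Prandtl's matching conditions as $s\to\pm\infty$, and finally assembles the composite expansion $u_0^{int}+u_0^{ext}-u_0^{mach}$, which collapses to the same tanh. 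Your approach buys exactness and brevity; the paper's buys the boundary-layer framework it is advertising, and in particular the matching step is where the paper extracts information your argument leaves implicit.

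That is the one substantive gap: the paper's matching at $s\to+\infty$ and $s\to-\infty$ forces the compatibility condition $\alpha=-\beta$, i.e.\ the single tanh profile of amplitude $|\alpha|$ can only accommodate antisymmetric boundary data. In your writeup you choose the integration constant $C_1=-\frac{1}{2}\alpha^2$ from the behaviour near $x=-1$ and then assert that the other boundary condition is met ``to exponential accuracy'' for a suitable $k$; this is only true when $\beta=-\alpha$ (or exponentially close to it), since $U(1,\epsilon)\to-\alpha$ as $\epsilon\to 0$ regardless of $k$ chosen to fit the left endpoint. For general $\beta$ no choice of $k$ rescues both conditions, so you should either state $\alpha=-\beta$ as a hypothesis or derive it, as the paper does, from the two matching limits $\tanh\to\pm 1$. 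Your $L_2$ argument coincides with the paper's: $|U|\le|\alpha|$ gives $\int_{-1}^{1}|U|^2\,dx\le 2|\alpha|^2<\infty$.
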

\begin{proof}
Suppose $u_{t}(x,t)=0$ for fixed $t$, transforms to (\ref{Burger}) into
\begin{equation}
\label{au}
 \displaystyle \left\{ {\epsilon u^{+}_{xx}(x,t)- u^{+}(x,t) u^{+}_{x}(x,t)=0 \atop  u^{+}(1,t)=\beta } \right. 
\end{equation}
\begin{equation}
\label{ae}
 \displaystyle \left\{ {\epsilon u^{-}_{xx}(x,t)- u^{-}(x,t) u^{-}_{x}(x,t)=0 \atop  u^{-}(-1,t)=\alpha } \right. 
\end{equation}
Let's solve (\ref{au}).\\\\
Let's find, $u^{+,ext}_{0}$, taking
$$u^{+,ext}\sim \sum \epsilon^{n}u^{+,ext}_{n}(x),\ u^{+,ext}_{x}\sim \sum\epsilon^{n}u^{+,ext}_{nx}(x), \ u^{+,ext}_{xx}\sim  \sum\epsilon^{n}u^{+,ext}_{nxx}(x)  $$  
Replacing these expressions in (\ref{au}), we have:
$$\epsilon (\sum\epsilon^{n}u^{+,ext}_{nxx}(x)) - (\sum \epsilon^{n}u^{+,ext}_{n}(x))(\sum\epsilon^{n}u^{+,ext}_{nx}(x))= 0$$
From here,
$$ O(\epsilon^{0}): - u^{+,ext}_{0}u^{+,ext}_{0x}=0$$
$$ O(\epsilon): u^{+,ext}_{0xx}-(u^{+,ext}_{0}u^{+,ext}_{1x}+u^{+,ext}_{1} u^{+,ext}_{0x})=0 $$\\
Suppose $u_{0}^{+,ext}\neq 0,$ 
so $u_{0x}^{+,ext}=0,$ then $u_{0}^{+,ext}=k,$ thus $u_{0}^{+,ext}=\beta.$\\\\
Let's find $u^{+,int}_{0}$\\\\
Let $u^{+,int}(s)$ an expansion near the source where $s=\frac{x}{\epsilon^{\gamma}}$, replacing in (\ref{au}) we have
\begin{equation}
\label{ad}
\centerline{$\epsilon^{1 - 2\gamma}u_{ss}^{+,int}-\epsilon^{-\gamma}u^{+,int}u_{s}^{+,int}=0$}
\end{equation}
Balancing (\ref{ad}) $\gamma = 1$.\\\\
Taking\\
$$u^{+,int} \sim \sum\epsilon^{n}u^{+,int}_{n}(x), \ \  u^{+,int}_{s} \sim\sum\epsilon^{n}u^{+,int}_{ns}(x)\ \ \mbox{y} \ \ u^{+,int}_{ss}\sim \sum\epsilon^{n}u^{+,int}_{nss}(x)$$
Replacing these expressions in (\ref{ad}), we have:

$$\epsilon^{-1}(\sum\epsilon^{n}u^{+,int}_{nss}(x))
-\epsilon^{-1}(\sum \epsilon^{n}u^{+,int}_{n}(x))(\sum\epsilon^{n}u^{+,int}_{ns}(x))=0 $$\
From here,
\begin{equation}
\label{af}
\centerline{$O(\epsilon^{-1}):u^{+,int}_{0ss} -u^{+,int}_{0} u^{+,int}_{0s}=0$}
\end{equation}
Let's take the substitution $w=u_{0s}^{+,int},$ $w_{s}=w_{ u_{0}^{+,int}}w,$ replacing in (\ref{af}) we have $w\left( w_{u_{0}^{+,int}}- u_{0}^{+,int} \right)=0$ if and only if $w=0$ \ o \  $w_{ u_{0}^{+,int}}- u_{0}^{+,int}=0$.\\
Let's suppose $w \neq 0,$ so $w_{u_{0}^{+,int}}- u_{0}^{+,int} =0,$ 
using variable separation and integrating:
$$w=\frac{\left( u_{0}^{+,int} \right)^{2}}{2}+c_{0}.$$
Then
$$u_{0s}^{+,int}=\frac{\left( u_{0}^{+,int} \right)^{2}}{2}+c_{0} $$
Applying variable separation again, integrating and 
replacing $c_{0}$ por $-c_{0},$ we have
$$\frac{-\sqrt{2c_{0}}}{c_{0}}\tanh^{-1}\left( \frac{u_{0}^{+,int}}{\sqrt{2c_{0}}} \right)+c_{1}=s+c_{2} $$
Thus,
$$u_{0}^{+,int}=-\sqrt{2c_{0}}\tanh\left[ \frac{\sqrt{2c_{0}}}{2}\left(s +c_{3} \right) \right]$$
Using prandtl's glue condition
$$\lim_{s\rightarrow +\infty}u_{0}^{+,int}=\lim_{x\rightarrow 0^{+}}u_{0}^{+,ext}$$
we have
$$\lim_{s\rightarrow +\infty}\left\{-\sqrt{2c_{0}}\tanh\left[ \frac{\sqrt{2c_{0}}}{2}\left(s +c_{3} \right) \right] \right\} = \lim_{x\rightarrow 0^{+}} \beta$$
Given the $\tanh\left[ \frac{\sqrt{2c_{0}}}{2}\left(s +c_{3} \right) \right] \rightarrow 1,$ 
when $s\rightarrow +\infty$, so
$$u_{0}^{+,int}=-\beta\tanh\left[  \frac{\beta}{2}\left( \frac{x}{\epsilon} +k\right) \right] $$
Doing an analogous process to $u^{-},$ we have
$$u_{0}^{-,ext}=\alpha \ \ \mbox{y} \ \ u_{0}^{-,int}= -\sqrt{2c_{0}}\tanh\left[ \frac{\sqrt{2c_{0}}}{2}\left(s +c_{3} \right) \right]$$
As $\tanh\left[ \frac{\sqrt{2c_{0}}}{2}\left(s +c_{3} \right) \right] \rightarrow -1,$ when $s\rightarrow -\infty,$ 
it follows that $\alpha = -\beta.$\\\\
Thus
$$
\left\{ \begin{array}{rcl}
u_{0}^{-,ext} &=& \alpha \\
u_{0}^{int}&=& -\alpha\tanh\left[  \frac{\alpha}{2}\left( \frac{x}{\epsilon} +k\right) \right]  \\
 u_{0}^{+,ext}&=&-\alpha
\end{array}\right.$$\\

Let's find $U^{comp}$
\begin{itemize}
\item[i.]$$
\begin{array}{rcl}
U^{comp}&=&u_{0}^{int}+u_{0}^{-,ext}-u_{0}^{mach} \\ \\
 &=& -\alpha\tanh\left[  \frac{\alpha}{2}\left( \frac{x}{\epsilon} +k\right) \right] + \alpha -\alpha \\ \\
 &=&-\alpha\tanh\left[  \frac{\alpha}{2}\left( \frac{x}{\epsilon} +k\right) \right]
\end{array}$$
\item[ii.]$$
\begin{array}{rcl}
U^{comp}&=&u_{0}^{int}+u_{0}^{+,ext}-u_{0}^{mach} \\ \\
 &=& -\alpha\tanh\left[  \frac{\alpha}{2}\left( \frac{x}{\epsilon} +k\right) \right] + (-\alpha) -(-\alpha) \\ \\
 &=&-\alpha\tanh\left[  \frac{\alpha}{2}\left( \frac{x}{\epsilon} +k\right) \right]
\end{array}$$
\end{itemize}
Thus 
$$U(x,\epsilon)=-\alpha\tanh\left[  \frac{\alpha}{2}\left( \frac{x}{\epsilon} +k\right) \right]$$\\

Show that $U(x,\epsilon) \in L_{2}([-1,1]).$\\[0.3cm]
Indeed:

$$
\begin{array}{rcl}
\vert U(x,\epsilon)\vert^{2}&=& \vert -\alpha \tanh\left[  \frac{\alpha}{2}\left( \frac{x}{\epsilon} +k\right) \right] \vert^{2} \\ \\
 &=& \vert \alpha\vert^{2} \vert\tanh\left[  \frac{\alpha}{2}\left( \frac{x}{\epsilon} +k\right) \right]\vert^{2}  \\ \\
 & < & \left\vert\alpha\right\vert^{2}
\end{array}$$
So,
$$\int_{-1}^{1}\vert U(x,\epsilon)\vert^{2}dx < \int_{-1}^{1}\vert \alpha\vert^{2}dx = 2\vert \alpha\vert^{2} < \infty.$$

\end{proof}

\section{Metastability of U(x,$\epsilon$)}
The unique equilibrium solution U(x,$\epsilon$) metastable, for the problem (\ref{Burger}), is given asymptotically by
$$U(x,\epsilon)=-\alpha\tanh\left[  \frac{\alpha}{2}\left( \frac{x}{\epsilon} +k\right) \right]$$
Let us determine the stability of this solution by linealizing the Burger equation with viscosity, having regard to the expression
$$u(x,t) = U(x,\epsilon) + \nu e^{-\lambda t}\Phi(x), \ \ \nu \ll 1.$$
Then 
\begin{align*}
-\lambda \nu e^{-\lambda t}\Phi(x)&= \epsilon \left(U_{xx}(x,\epsilon) + \nu e^{-\lambda t}\Phi_{xx}(x) \right) - \left(U(x,\epsilon) + \nu e^{-\lambda t}\Phi(x)\right)\left( U_{x}(x,\epsilon) + \nu e^{-\lambda t}\Phi_{x}(x)\right)\\ 
&= \left( \epsilon \nu e^{-\lambda t}\Phi_{xx}(x) - U(x, \epsilon)\nu e^{-\lambda t}\Phi_{x}(x) - \nu e^{-\lambda t}\Phi(x)u_{x}(x, \epsilon) \right) + \Gamma (x,t).
\end{align*}
with \ $\Gamma (x,t) = \epsilon U_{xx}(x,\epsilon)- U(x,\epsilon)u_{x}(x,\epsilon) - \nu^{2}e^{-2\lambda t}\Phi(x)\Phi_{x}(x)$.\\\\
Where,
$$O(\nu): \  -\lambda e^{-\lambda t}\Phi(x) = \epsilon e^{-\lambda t}\Phi_{xx}(x) - U(x, \epsilon)e^{-\lambda t}\Phi_{x}(x) - U_{x}(x, \epsilon)e^{-\lambda t}\Phi(x)$$

$$-\lambda \Phi(x) = \epsilon \Phi_{xx}(x) - U(x, \epsilon)\Phi_{x}(x) -\Phi(x)U_{x}(x, \epsilon)$$
So, $$\epsilon \Phi_{xx}(x) - \left(U(x,\epsilon)\Phi(x)\right)_{x} + \lambda \Phi (x) = 0$$
Taking,
\begin{enumerate}
\item[i.]$x = -1, \ \alpha = 1 \ \  \mbox{y} \ \   k = 0, \  \  U(-1,\epsilon) = \tanh\left(\dfrac{1}{2 \epsilon}\right) \longrightarrow 1$, \ when $\epsilon \longrightarrow 0.$
\item[ii.]$x = 1, \ \alpha = 1 \ \  \mbox{y} \ \   k = 0, \  \  U(1,\epsilon) = -\tanh\left(\dfrac{1}{2 \epsilon}\right) \longrightarrow -1$, \ when $\epsilon \longrightarrow 0.$
\end{enumerate}
Now,
\begin{enumerate}
\item[i.]$1 = u(-1,t) = U(-1,\epsilon) + \nu e^{-\lambda t}\Phi(-1) = 1 + \nu e^{-\lambda t}\Phi(-1)$, $\Phi(-1) = 0.$
\item[ii.]$-1 = u(1,t) = U(1,\epsilon) + \nu e^{-\lambda t}\Phi(1) = -1 + \nu e^{-\lambda t}\Phi(1)$, $\Phi(1) = 0.$
\end{enumerate}
Consequently $\Phi$ satisfies the eigenvalue problem 
$$\epsilon \Phi_{xx}(x) - \left(U(x,\epsilon)\Phi(x)\right)_{x} + \lambda \Phi (x) = 0, \ \  -1 \leq x \leq 1, \ \ \Phi(\pm 1)= 0.$$
Defining $\phi(x) \equiv \exp \left[ -\epsilon^{-1}\displaystyle \int_{0}^{x}u(s,\epsilon)ds\right]\Phi(x)$ \ con \  $\Phi(\pm 1)= 0$, we observe that $\phi$ satisfies the own value problem
\begin{equation}
\label{est0}
\epsilon \phi_{xx}(x) + U(x,\epsilon)\phi_{x}(x) + \lambda \phi (x) = 0, \ \  -1 \leq x \leq 1, \ \ \phi(\pm 1)= 0
\end{equation}
Indeed:
$$\epsilon \Phi_{xx}(x) - \left(U(x,\epsilon)\Phi(x)\right)_{x} + \lambda \Phi (x) = 0.$$
\begin{equation}
\label{est}
\epsilon \Phi_{xx}(x) - U(x,\epsilon)\Phi_{x}(x) - U_{x}(x,\epsilon)\Phi_{x}(x) + \lambda \Phi (x) = 0
\end{equation}
Muttiplying (\ref{est}) by 
$$\exp \left[ -\epsilon^{-1}\displaystyle \int_{0}^{x}u(s,\epsilon)ds\right].$$
We have, 
$$ \epsilon \exp \left[ -\epsilon^{-1}\displaystyle \int_{0}^{x}u(s,\epsilon)ds\right] \Phi_{xx}(x) -  U(x,\epsilon)\exp \left[ -\epsilon^{-1}\displaystyle \int_{0}^{x}u(s,\epsilon)ds\right] \Phi_{x}(x) \ - $$
$$- \  U_{x}(x,\epsilon)\exp \left[ -\epsilon^{-1}\displaystyle \int_{0}^{x}u(s,\epsilon)ds\right]\Phi_{x}(x) + \lambda \exp \left[ -\epsilon^{-1}\displaystyle \int_{0}^{x}u(s,\epsilon)ds\right]\Phi (x)  = 0$$
Then,
$$\epsilon \exp \left[ -\epsilon^{-1}\displaystyle \int_{0}^{x}u(s,\epsilon)ds\right] \Phi_{xx}(x) - 2U(x,\epsilon) \exp \left[ -\epsilon^{-1}\displaystyle \int_{0}^{x}u(s,\epsilon)ds\right] \Phi_{x}(x) + $$
$$+ \left( \epsilon^{-1}U^{2}(x,\epsilon)\exp \left[ -\epsilon^{-1}\displaystyle \int_{0}^{x}u(s,\epsilon)ds\right] - U_{x}(x, \epsilon)\exp \left[ -\epsilon^{-1}\displaystyle \int_{0}^{x}u(s,\epsilon)ds\right]\right)\Phi(x) + $$
$$+\  U(x,\epsilon)\exp \left[ -\epsilon^{-1}\displaystyle \int_{0}^{x}u(s,\epsilon)ds\right]\Phi_{x} - \epsilon^{-1}U^{2}(x,\epsilon)\exp \left[ -\epsilon^{-1}\displaystyle \int_{0}^{x}u(s,\epsilon)ds\right]\Phi(x) + $$
$$ + \ \lambda \exp \left[ -\epsilon^{-1}\displaystyle \int_{0}^{x}u(s,\epsilon)ds\right]\Phi(x) = 0 $$
From here, 
$$\epsilon \phi_{xx}(x) + U \phi_{x}(x) + \lambda \phi(x) = 0$$
Where, 
\begin{enumerate}
\item[i.]$\phi(x) \equiv \exp \left[ -\epsilon^{-1}\displaystyle \int_{0}^{x}u(s,\epsilon)ds\right]\Phi(x)$
\item[ii.]$\phi_{x}(x) \equiv - \epsilon^{-1}U(x, \epsilon)\exp \left[ -\epsilon^{-1}\displaystyle \int_{0}^{x}u(s,\epsilon)ds\right]\Phi(x)\ +\  \exp \left[ -\epsilon^{-1}\displaystyle \int_{0}^{x}u(s,\epsilon)ds\right]\Phi_{x}(x)$
\item[iii.]$\phi_{xx}(x) \equiv \exp \left[ -\epsilon^{-1}\displaystyle \int_{0}^{x}u(s,\epsilon)ds\right]\Phi_{xx}(x) - 2\epsilon^{-1}U(x,\epsilon) \exp \left[ -\epsilon^{-1}\displaystyle \int_{0}^{x}u(s,\epsilon)ds\right] \Phi_{x}(x)$
$+ \left( \epsilon^{2}U^{2}(x,\epsilon) \exp \left[ -\epsilon^{-1}\displaystyle \int_{0}^{x}u(s,\epsilon)ds\right] -\epsilon^{-1}U_{x}(x,\epsilon)\exp \left[ -\epsilon^{-1}\displaystyle \int_{0}^{x}u(s,\epsilon)ds\right]\right)\Phi(x)$
\end{enumerate}
We calculate $\phi(\pm1)$
\begin{enumerate}
\item[i.]$\phi(-1) \equiv \exp \left[ -\epsilon^{-1}\displaystyle \int_{0}^{-1}u(s,\epsilon)ds\right]\Phi(-1) = 0$
\item[ii.]$\phi(1) \equiv \exp \left[ -\epsilon^{-1}\displaystyle \int_{0}^{1}u(s,\epsilon)ds\right]\Phi(1) = 0.$
\end{enumerate} 
Since $U$ is monotonically decreasing by $x$ and zero at the turning point $x = 0$, the nature of the turning point for (\ref{est0}) is closely related to the eigenvalue problem $\epsilon \phi{''} - x\phi^{'} +  \lambda \phi = 0,\ \  -1 < x < 1, \ \ \phi(\pm 1) = 0$, which has an exponentially small eigenvalue. In (see, \cite{GP}), it was shows that (\ref{est0}) has as asympotically exponentially small principal eigenvalue and in (see, \cite{GQ}) 
it was obtained asymtotic formula for this function eigenvalue, in turn in these articles demonstrated the exisence of $\phi$. \\
On the other hand, let us see that $u(x,t) = U(x,\epsilon) + \nu e^{-\lambda t}\Phi(x)$ \  with  $\nu \ll1$ converge a $U(x, \epsilon)$ in the norm de $L_{2}$ when $t \longrightarrow \infty$.\\
Indeed:\\
Applying the derivation rule under the integral (see, \cite{D}), we have
\begin{eqnarray*} 
\mbox{$\dfrac{\partial}{\partial t}\displaystyle \int_{-1}^{1}\left(\nu e^{-\lambda t}\Phi(x) \right)^{2}dx$} & = & \mbox{$\displaystyle \int_{-1}^{1}\dfrac{\partial}{\partial t}\left(\nu e^{-\lambda t}\Phi(x) \right)^{2}dx$} \\[0.2cm]
& = & \mbox{$\displaystyle \int_{-1}^{1}2\left(\nu e^{-\lambda t}\Phi(x) \right)(-\lambda)\nu e^{-\lambda t}\Phi(x) dx$} \\[0.2cm] 
& = & \mbox{$\displaystyle \int_{-1}^{1}-2\lambda \nu^{2} e^{-2\lambda t}e^{-\lambda t}\Phi^{2}(x) dx$} \\[0.2cm]
& = & \mbox{$-2\lambda \nu^{2} e^{-2\lambda t}e^{-\lambda t}\displaystyle \int_{-1}^{1}\Phi^{2}(x) dx$}
\end{eqnarray*}
Si $\lambda \geqslant 0$, \ $-2\lambda \nu^{2} e^{-2\lambda t}e^{-\lambda t}\displaystyle \int_{-1}^{1}\Phi^{2}(x)dx \leqslant 0$, then  $\norm{\nu e^{-\lambda t}\Phi(x)}_{2} \longrightarrow 0$, $\nu \ll 1$.\\\\
From here,  $$\norm{u(x,t) - U(x, \epsilon)}_{2} \ =\ \norm{\nu e^{-\lambda t}\Phi(x)}_{2}$$
Thus, 
$$\norm{u(x,t) - U(x, \epsilon)}_{2} \longrightarrow 0, \ \mbox{on the norm of} \ L_{2} \ \mbox{when}\ \ t \longrightarrow \infty.$$
Consequently
$$u(x,t) \sim U(x, \epsilon).$$\\

Show that $u(x,t) = U(x,\epsilon) + \nu e^{-\lambda t}\Phi(x) \in L_{2}([-1,1])$ \  with  $\nu \ll1$ and $\Phi(x)$ bounded.\\[0.3cm]
Indeed:\\[0.2cm]
To $\lambda \geq 0,\ \  e^{-\lambda t} < 1$ and there is $M \geq 0$ \ such that $\vert \Phi(x)\vert< \dfrac{\sqrt{M}}{2}.$

$$\vert u(x,t)\vert = \vert U(x,\epsilon) + \nu e^{-\lambda t}\Phi(x) \vert \Longleftrightarrow \vert u(x,t)\vert^{2} = \vert U(x,\epsilon) + \nu e^{-\lambda t}\Phi(x) \vert^{2}$$
Then,
$$\vert u(x,t)\vert^{2} = \vert U(x,\epsilon) + \nu e^{-\lambda t}\Phi(x) \vert^{2} \leq 4 \left(\vert U(x,\epsilon)\vert^{2} + \vert \nu e^{-\lambda t}\Phi(x) \vert^{2} \right)< 4\vert \alpha\vert^{2} + M$$
So,
$$\int_{-1}^{1}\vert u(x,t)\vert^{2}dx < \int_{-1}^{1}(4\vert \alpha\vert^{2} + M)dx = 8\vert \alpha\vert^{2} + 2M < \infty.$$

\section{Conclusion}
After knowing and using a literature other than those already existing to solve nonlinear partial differential equations involving a certain degree of complexity, It was possible to find and establish an approximate solution of the Burger equation with viscosity associated with many physical problems such as sound waves in a viscous, dynamical medium of gases, waves in viscous elastic tubes filled with liquid, modeling of the flow fico, among others. In addition, it was observed that the solution $u(x; t)$ asi of defined converges to $U(x,\epsilon)$ in the norm $L_{2}$ when $t \longrightarrow \infty$.

\end{document}